\providecommand{\U}[1]{\protect\rule{.1in}{.1in}}
\theoremstyle{plain}
\newtheorem{example}{Example}
\newtheorem{proposition}{Proposition}
\numberwithin{equation}{section}
\begin{document}
\author[Pellegrino]{Daniel Pellegrino}
\address{Departamento de Matem\'{a}tica, \\
\indent
Universidade Federal da Para\'{\i}ba, \\
\indent
58.051-900 - Jo\~{a}o Pessoa, Brazil.}
\email{dmpellegrino@gmail.com and pellegrino@pq.cnpq.br}
\author[Seoane]{Juan B. Seoane-Sep\'{u}lveda}
\address{Departamento de An\'{a}lisis Matem\'{a}tico,\\
\indent
Facultad de Ciencias Matem\'{a}ticas, \\
\indent
Plaza de Ciencias 3, \\
\indent
Universidad Complutense de Madrid,\\
\indent
Madrid, 28040, Spain.}
\email{jseoane@mat.ucm.es}
\title[Subpolynomial constants in the Bohnenblust--Hille inequality via interpolation]{An interpolation technique towards the subpolynomial constants in the
multilinear Bohnenblust--Hille inequality}
\date{}
\keywords{Bohnenblust--Hille inequality; Littlewood's $4/3$ inequality; Absolutely
summing operators}
\maketitle

\begin{abstract}
The multilinear Bohnenblust--Hille inequality, in a recent more general
presentation, asserts that if $q_{1},...,q_{n}\in\lbrack1,2]$ and $\frac
{1}{q_{1}}+\cdots+\frac{1}{q_{n}}=\frac{n+1}{2}$, then there is a constant
$K=K_{q_{1}...q_{n}}\geq1$ such that{\small {
\[
\left(  {\sum\limits_{i_{1}=1}^{\infty}}\left(  {\sum\limits_{i_{2}=1}%
^{\infty}}\left(  ...\left(  {\sum\limits_{i_{n-1}=1}^{\infty}}\left(
{\sum\limits_{i_{n}=1}^{\infty}}\left\vert A\left(  e_{i_{1}},...,e_{i_{n}%
}\right)  \right\vert ^{q_{n}}\right)  ^{\frac{q_{n-1}}{q_{n}}}\right)
^{\frac{q_{n-2}}{q_{n-1}}}\cdots\right)  ^{\frac{q_{2}}{q_{3}}}\right)
^{\frac{q_{1}}{q_{2}}}\right)  ^{\frac{1}{q_{1}}}\leq K\left\Vert
A\right\Vert
\]
}}for all continuous $n$-linear forms $A:c_{0}\times\cdots\times
c_{0}\rightarrow\mathbb{K};$ the original construction provides constants
$K_{q_{1}...q_{n}}=\left(  \sqrt{2}\right)  ^{n-1}$ for real scalars and
$K_{q_{1}...q_{n}}=\left(  2/\sqrt{\pi}\right)  ^{n-1}$ for complex scalars.
In this note we present a new interpolative approach which provides quite
better constants. Our procedure, when restricted to the original
Bohnenblust--Hille inequality, gives a very simple and self-contained
interpolative proof of the Bohnenblust--Hille inequality with the best known
constants (with subpolynomial growth), which avoids the technical issues of
the original proof. This seems to be unexpectedly surprising since the known
interpolative approaches to the Bohnenblust--Hille inequality only provide
constants having exponential growth.

\end{abstract}


\section{Introduction}

Recall that that the multilinear Bohnenblust-Hille inequality for
$\mathbb{K}=\mathbb{R}$ or $\mathbb{C}$ (see \cite{bh}) asserts that for every
positive integer $n\geq1$ there exist positive scalars $C_{n}\geq1$ such that
\begin{equation}
\left(  \sum\limits_{i_{1},\ldots,i_{n}=1}^{\infty}\left\vert A(e_{i_{^{1}}%
},\ldots,e_{i_{n}})\right\vert ^{\frac{2n}{n+1}}\right)  ^{\frac{n+1}{2n}}\leq
C_{n}\left\Vert A\right\Vert \label{hypp}%
\end{equation}
for all $n$-linear forms $A:c_{0}\times\cdots\times c_{0}\rightarrow
\mathbb{K}$, where $e_{i}$ are the canonical vectors of $c_{0}.$ A very recent
generalization of the (multilinear) Bohnenblust--Hille inequality was
presented in \cite{ba}, highlighting the importance of interpolation arguments
in this framework. Namely, in \cite{ba}, with a new interpolative approach, it
was proved that, if $n\geq1$ and $q_{1},...,q_{n}\in\lbrack1,2],$ then the
following assertions are equivalent:

\begin{itemize}
\item[(1)] There is a constant $K_{q_{1}...q_{n}}\geq1$ such that{\small {
\begin{equation}
\left(  {\textstyle\sum\limits_{i_{1}=1}^{\infty}}\left(  {\textstyle\sum
\limits_{i_{2}=1}^{\infty}}\left(  ...\left(  {\textstyle\sum\limits_{i_{n-1}%
=1}^{\infty}}\left(  {\textstyle\sum\limits_{i_{n}=1}^{\infty}}\left\vert
A\left(  e_{i_{1}},...,e_{i_{n}}\right)  \right\vert ^{q_{n}}\right)
^{\frac{q_{n-1}}{q_{n}}}\right)  ^{\frac{q_{n-2}}{q_{n-1}}}\cdots\right)
^{\frac{q_{2}}{q_{3}}}\right)  ^{\frac{q_{1}}{q_{2}}}\right)  ^{\frac{1}%
{q_{1}}}\leq K_{q_{1}\ldots q_{n}}\left\Vert A\right\Vert \label{75}%
\end{equation}
}}for all continuous $n$-linear forms $A:c_{0}\times\cdots\times
c_{0}\rightarrow\mathbb{K}$.

\item[(2)] $\frac{1}{q_{1}}+\cdots+\frac{1}{q_{n}}\leq\frac{n+1}{2}.$
\end{itemize}

\noindent In the case
\[
q_{1}=\cdots=q_{n}=\frac{2n}{n+1}%
\]
we recover the classical Bohnenblust--Hille inequality. However, the constants
$K_{q_{1}\ldots q_{n}}$ in the extremal case, i.e., $\frac{1}{q_{1}}%
+\cdots+\frac{1}{q_{n}}=\frac{n+1}{2},$ arisen from this new approach have an
exponential growth; more precisely, $K_{q_{1}...q_{n}}=\left(  \sqrt
{2}\right)  ^{n-1}$ for real scalars and $K_{q_{1}...q_{n}}=\left(
2/\sqrt{\pi}\right)  ^{n-1}$ for complex scalars (see \cite[Remark 5.1]{ba}),
and this may be a little bit disappointing at a first glance, having in mind
that the optimal constants of the multilinear Bohnenblust--Hille inequality
have a subpolynomial growth (see \cite{jfalimite}).

In this note we present a new interpolative argument which generates quite
better constants for the constants $K_{q_{1}...q_{n}}$. As an illustration of
the effectiveness of this method we show, in details, in the Section 3, that
we recover the best known constants of the Bohnenblust--Hille inequalities
(that can be found in \cite{jfa2} and \cite{computer} and relies on results
from \cite{def}) in an elementary form, avoiding the technicalities from
\cite{def} (for instance, the technical variant of an inequality due to Blei
(see \cite[Lemma 3.1]{def})).

\section{The new interpolative approach}

From now on, for positive integers $n\geq1,$ the symbols $C_{n}$ and
$\widetilde{C_{n}}$ denote the optimal constants of the Bohnenblust--Hille
inequalities for real and complex scalars, respectively. Also, for
$p\in\left[  1,2\right]  ,$ the symbols $A_{p}$ and $\widetilde{A_{p}}$
denote the optimal constants of the Khinchin inequality with Rademacher (real
case) and Steinhaus (complex case) variables, respectively. Finally,
$\left(  q_{1},...,q_{n}\right)  $ shall be called a Bohnenblust--Hille exponent if we have
$q_{1},...,q_{n}\in\left[  1,2\right]  $ and
\[
\frac{1}{q_{1}}+\cdots+\frac{1}{q_{n}}=\frac{n+1}{2}.
\]

\bigskip In \cite{ba} it was shown that each Bohnenblust--Hille exponent
$\left(  q_{1},...,q_{n}\right)  $ is generated by interpolating the $n$
Bohnenblust--Hille exponents $\left(  2,2,...,2,1\right)  ,\left(
2,2,...,2,1,2\right)  ,...,\left(  1,2,2,...,2\right)  .$ This construction
provides, at the end, estimates $\left(  \sqrt{2}\right)  ^{n-1}$ and $\left(
\frac{2}{\sqrt{\pi}}\right)  ^{n-1}$ for $K_{q_{1}...q_{n}}$ (\cite[Remark
5.1]{ba}), for real and complex scalars, respectively. However, and as mentioned
before, when we restrict our attention to the classical Bohnenblust--Hille
exponents, since the optimal constants are subpolynomial, the above estimates
are \textit{quite bad}. The following simple result shows that the estimates
above are also far from being good, even for general Bohnenblust--Hille exponents. This result also gives us a family of Bohnenblust--Hille exponents with
small constants (in some sense) which shall be used to generate, by interpolation, other Bohnenblust--Hille exponents with \emph{small} constants (see Example \ref{099a}).

\begin{proposition}
\label{099}The optimal constants associated to the Bohnenblust--Hille
exponents $\left(  q_{1},...,q_{n}\right)  $ with $q_{j}=\frac{2k}{k+1}$ for
$k$ indexes $j$ and $q_{j}=2$ for $n-k$ indexes $j$ are smaller than%
\begin{equation}
C_{k}\left(  A_{\frac{2k}{k+1}}\right)  ^{-\left(  n-k\right)  }\text{ and
}\widetilde{C_{k}}\left(  \widetilde{A_{\frac{2k}{k+1}}}\right)  ^{-\left(
n-k\right)  }\label{q2}%
\end{equation}
for real and complex scalars, respectively.
\end{proposition}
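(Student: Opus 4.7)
The plan is to dispose of the $n-k$ coordinates of exponent $2$ by averaging against independent Rademacher (respectively Steinhaus) vectors, so that what remains is a $k$-linear form to which the classical $k$-linear Bohnenblust--Hille inequality applies. Since the optimal constant of (1.2) is invariant under permutations of the variables of $A$ (by pre-composing with the transpose), I may assume that $q_1=\cdots=q_k=p:=\tfrac{2k}{k+1}$ and $q_{k+1}=\cdots=q_n=2$. With this ordering the ratios $q_j/q_{j+1}$ equal $1$ for $j\neq k$ and equal $k/(k+1)$ for $j=k$, so the iterated sum on the left of (1.2) collapses to
\[
\left(\sum_{i_1,\ldots,i_k} b_{i_1\ldots i_k}^{\,p}\right)^{1/p}, \qquad b_{i_1\ldots i_k}:=\left(\sum_{i_{k+1},\ldots,i_n}|A(e_{i_1},\ldots,e_{i_n})|^{2}\right)^{1/2}.
\]

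For each $t=(t_1,\ldots,t_{n-k})\in[0,1]^{n-k}$ I introduce the $k$-linear contraction
\[
A_t(x_1,\ldots,x_k):=A\!\left(x_1,\ldots,x_k,\sum_i r_i(t_1)e_i,\ldots,\sum_i r_i(t_{n-k})e_i\right),
\]
which, after the standard truncation to finitely many indices (the Rademacher sign vectors lie in $\ell_\infty$ rather than $c_0$), satisfies $\|A_t\|\leq\|A\|$. Applying the $k$-linear Bohnenblust--Hille inequality to $A_t$, raising to the $p$-th power, integrating in $t$, and invoking Fubini yields
\[
\sum_{i_1,\ldots,i_k}\int_{[0,1]^{n-k}}|A_t(e_{i_1},\ldots,e_{i_k})|^p\,dt \leq C_k^{\,p}\|A\|^p.
\]
On the other hand, the map $t\mapsto A_t(e_{i_1},\ldots,e_{i_k})$ is exactly the Rademacher chaos $\sum_{i_{k+1},\ldots,i_n} A(e_{i_1},\ldots,e_{i_n})\,r_{i_{k+1}}(t_1)\cdots r_{i_n}(t_{n-k})$, whose $L^2([0,1]^{n-k})$-norm equals $b_{i_1\ldots i_k}$ by orthogonality; the iterated Khinchin inequality with constant $A_p^{-(n-k)}$ therefore supplies
\[
b_{i_1\ldots i_k}^{\,p} \leq A_p^{-(n-k)p}\int_{[0,1]^{n-k}}|A_t(e_{i_1},\ldots,e_{i_k})|^p\,dt.
\]
Summing in $(i_1,\ldots,i_k)$ and combining the two displays gives $\sum b^{\,p}\leq C_k^{\,p}A_p^{-(n-k)p}\|A\|^p$, i.e., the claimed bound $C_k\,A_p^{-(n-k)}$. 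The complex case is identical, with Steinhaus variables replacing Rademachers and $\widetilde{C_k}$, $\widetilde{A_p}$ in place of $C_k$, $A_p$.

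The principal technical obstacle is justifying the iterated form of Khinchin with the sharp product constant $A_p^{-(n-k)}$: single-variable Khinchin converts an $\ell^2$-norm of coefficients into an $L^p$-norm, and iterating across $n-k$ independent Rademacher systems forces one to shuttle between mixed $L^p$- and $L^2$-norms. The standard route is induction on the number of averaging variables, alternating one-variable Khinchin with Minkowski's integral inequality to swap an outer $L^2$ with an inner $L^p$ (which is permitted precisely because $p\leq 2$). Once this iterated inequality is in place, the rest of the proof is simply a matter of reading the two upper bounds for $\int|A_t(e_{i_1},\ldots,e_{i_k})|^p\,dt$ against each other.
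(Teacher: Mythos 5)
Your argument is correct and is essentially the paper's own proof written out in full: the paper reorders the exponents so that $q_1=\cdots=q_k=\tfrac{2k}{k+1}$ (via an adaptation of Proposition 3.1 of \cite{ba}, which your permutation-invariance remark replaces) and then invokes the multiple Khinchin inequality for Rademacher, respectively Steinhaus, variables (\cite{jmaa}, \cite{jfa2}) together with the $k$-linear Bohnenblust--Hille inequality, which is exactly your averaging over $A_t$ followed by the iterated Khinchin step with constant $A_{2k/(k+1)}^{-(n-k)}$. The only difference is that you sketch a proof of the multiple Khinchin inequality where the paper simply cites it.
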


\begin{proof}
\bigskip A simple adaptation of \cite[Prop. 3.1]{ba} tells us that we can
consider $q_{1}=\cdots=q_{k}=\frac{2k}{k+1}$ in (\ref{75})$.$ The result is
obtained by using the multiple Khinchin inequality for Steinhaus variables for
complex scalars (see \cite[Theorem 2.2]{jfa2}) and the multiple Khinchin
inequality for Rademacher functions for real scalars (see \cite[Theorem
1.3]{jmaa}).
\end{proof}

\bigskip A simple calculus shows us that the constants in \eqref{q2} are smaller
than $\left(  \sqrt{2}\right)  ^{n-1}$ and $\left(  \frac{2}{\sqrt{\pi}%
}\right)  ^{n-1}.$ The next example shows that the same happens in other situations:

\begin{example}
\label{099a}
Consider the Bohnenblust--Hille exponent $\left(  q,q,q,r,r,s,t,u\right)  $
with $q,r,s,t,u$ being pairwise distinct and $s>t>u.$ Since%
\[
\frac{3}{q}+\frac{2}{r}+\frac{1}{s}+\frac{1}{t}+\frac{1}{u}=\frac{9}{2},
\]
a simple computation shows that%
\[
\left(  q,r,s,t,u\right)  \in\left[  \frac{3}{2},2\right]  \times\left[
\frac{4}{3},2\right]  \times\left[  \frac{3}{2},2\right]  \times\left[
\frac{4}{3},2\right)  \times\left[  1,2\right)  .
\]
Then we interpolate%
\[
\left(  \frac{3}{3},\frac{3}{2},\frac{3}{2},2,...,2\right)  ,\left(
2,2,2,\frac{4}{3},\frac{4}{3},2,2,2\right)  ,\left(  2,...,2,\frac{3}{2}%
,\frac{3}{2},\frac{3}{2}\right)  ,\left(  2,...,2,\frac{4}{3},\frac{4}%
{3}\right)  ,\left(  2,...,2,1\right)
\]
with%
\[
\left(  \theta_{j}\right)  _{j=1}^{4}=\left(  2\left(  3-\frac{2}{r}-\frac
{1}{s}-\frac{1}{t}-\frac{1}{u}\right)  ,-2\left(  \frac{-2+r}{r}\right)
,-3\left(  \frac{-2+s}{s}\right)  ,4\left(  \frac{s-t}{st}\right)  ,2\left(
\frac{t-u}{tu}\right)  \right)  ,
\]
to conclude that (for real scalars) the constant $K_{qqqrrstu}$ is dominated
by%
\[
\left(  C_{3}\left(  A_{\frac{3}{2}}\right)  ^{-5}\right)  ^{\theta_{1}%
+\theta_{3}}\left(  C_{2}\left(  A_{\frac{4}{3}}\right)  ^{-6}\right)
^{\theta_{2}+\theta_{4}}\left(  \sqrt{2}^{7}\right)  ^{\theta_{5}}.
\]
Note that this estimate is quite better than $\left(  \sqrt{2}\right)  ^{7}.$
\end{example}

\bigskip The idea is that there are several different interpolative approaches
that generate a given Bohenenblust--Hille exponent $\left(  q_{1}%
,...,q_{n}\right)  .$ If $\left(  q_{1},...,q_{n}\right)  $ is generated by
the Bohnenblust--Hille exponents $$\alpha_{1}=\left(  \alpha_{11}%
,...,\alpha_{1n}\right)  ,...,\alpha_{j}=\left(  \alpha_{j1},...,\alpha
_{jn}\right)  ,$$ let us denote $\left(  q_{1},...,q_{n}\right)  \in<\alpha
_{1},...,\alpha_{j}>$ and represent the constant derived from the
interpolation of $\alpha_{1},...,\alpha_{n}$ by $C_{\alpha_{1},...,\alpha_{n}%
}.$ Then, clearly,%
\[
K_{q_{1}...,q_{n}}\leq\inf\left\{  C_{\alpha_{1},...,\alpha_{n}}:\left(
q_{1},...,q_{n}\right)  \in<\alpha_{1},...,\alpha_{j}>\text{ and }%
j\in\mathbb{N}\right\}  .
\]

As the previous example suggests, it seems that better constants are derived
from interpolation with less steps. Also, it seems that sometimes a different
choice of Bohnenblust--Hille exponents, although using the same number of
interpolative steps, provides better constants. For instance, $\left(
q_{1},q_{2},q_{3}\right)  $ with $q_{1}>q_{2}>q_{3}$ can be generated by%
\[
\left(  2,2,1\right)  ,\left(  2,\frac{4}{3},\frac{4}{3}\right)  ,\left(
\frac{3}{2},\frac{3}{2},\frac{3}{2}\right)
\]
with adequate interpolation weights $\theta_{1},\theta_{2}$ and $\theta_{3}$,
respectively. The resulting constant for real scalars (and using Proposition
\ref{099}) is%
\[
2^{\theta_{1}}\left(  C_{2}A_{\frac{4}{3}}\right)  ^{-\theta_{2}}\left(
C_{3}\right)  ^{\theta_{3}},
\]
which, as it can be easily checked, is smaller than$\left(  \sqrt{2}\right)
^{2}$, which is generated by the interpolation of $\left(  2,2,1\right)
,\left(  2,1,2\right)  ,$ and $(2,2,1).$

\bigskip In the next section we apply our approach to recover, in a very
simple and quick way, the best known constants for the classical
Bohnenblust--Hille inequality.

\section{The interpolative proof of multilinear the Bohnenblust--Hille
inequality with subpolynomial constants}

The first interpolative proof of the Bohnenblust--Hille inequality is probably
due to Kaijser (\cite{Ka}, see also \cite{surv} for details); this proof,
however, gives constants with exponential growth. As mentioned before, the
same exponential growth appears in the recent interpolative proof of the
general Bohnenblust--Hille inequality from \cite{ba}. In this section we
illustrate, in details, the particular case our our approach when we deal with
the classical Bohnenblust--Hille inequality: we recover the best known
constants straightforwardly. Since the constants of the real and complex cases
are obtained via different approaches (this strange fact was recently stressed
in \cite{computer}) we will divide the proof in two different cases.

\subsection{Case of complex scalars}

The constant $\widetilde{C_{2n}}$ will be derived from the constant
$\widetilde{C_{n}}$. As mentioned in
the previous section, from the multiple Khinchin inequality for Steinhaus
variables a straightforward computation provides the constant%
\[
\widetilde{C_{n}}\left(  \widetilde{A_{\frac{2n}{n+1}}}\right)  ^{-n}%
\]
for the Bohnenblust--Hille exponents $\left(  \overset{n\text{ times}}%
{\frac{2n}{n+1},...,\frac{2n}{n+1}},\overset{n\text{ times}}{2,...,2}\right)
$ and $\left(  \overset{n\text{ times}}{2,...,2},\overset{n\text{ times}%
}{\frac{2n}{n+1},...,\frac{2n}{n+1}}\right)  .$ Now, interpolating these
exponents in the sense of \cite{ba} with $\theta_{1}=\theta_{2}=1/2$ we get%
\begin{equation}
\widetilde{C_{2n}}\leq\widetilde{C_{n}}\left(  \widetilde{A_{\frac{2n}{n+1}}%
}\right)  ^{-n}. \label{90}%
\end{equation}
For the case $2n+1$, again choosing $k_{1}=n$ and $r=2$, and using the
multiple Khinchin inequality for Steinhaus variables we obtain the constants%
\[
\widetilde{C_{n}}\left(  \widetilde{A_{\frac{2n}{n+1}}}\right)  ^{-n-1}\text{
and }\widetilde{C_{n+1}}\left(  \widetilde{A_{\frac{2\left(  n+1\right)
}{\left(  n+1\right)  +1}}}\right)  ^{-n}%
\]
for the Bohnenblust--Hille exponents $\left(  \overset{n\text{ times}}%
{\frac{2n}{n+1},...,\frac{2n}{n+1}},\overset{n+1\text{ times}}{2,...,2}%
\right)  $ and $\left(  \overset{n\text{ times}}{2,...,2},\overset{n+1\text{
times}}{\frac{2\left(  n+1\right)  }{\left(  n+1\right)  +1},...,\frac
{2\left(  n+1\right)  }{\left(  n+1\right)  +1}}\right)  $, respectively. Now
we interpolate the above Bohnenblust--Hille exponents with $\theta_{1}%
=\frac{n}{2n+1}$ and $\theta_{2}=\frac{n+1}{2n+1},$ respectively, and we get%
\begin{equation}
\widetilde{C_{2n+1}}\leq\left(  \widetilde{C_{n}}\left(  \widetilde
{A_{\frac{2n}{n+1}}}\right)  ^{-n-1}\right)  ^{\frac{n}{2n+1}}\left(
\widetilde{C_{n+1}}\left(  \widetilde{A_{\frac{2\left(  n+1\right)  }{\left(
n+1\right)  +1}}}\right)  ^{-n}\right)  ^{\frac{n+1}{2n+1}}. \label{92}%
\end{equation}
Note that the formulas (\ref{90}) and (\ref{92}) are precisely those from
\cite{jfa2} which generates the best known constants for the
Bohnenblust--Hille inequality for complex scalars.

\subsection{Case of real scalars\label{9870}}

The exactly same proof of the previous case, using now Rademacher functions
instead of Steinhaus variables and the corresponding multiple Khinchin
inequality, gives us the estimates from \cite{jmaa}. However, it was very
recently shown, in \cite{computer}, that in the case of real scalars the
estimates from \cite{jmaa} can be improved by using a somewhat chaotic
combinatorial approach. For instance, in \cite{computer} it was shown that a
better (smaller) constant for $m=26$ was obtained by combining the cases
$m=12$ and $m=14$ instead of using the case $m=13;$ it was also shown that
this also happens in several other cases. Of course, our interpolation
technique with a different choice of $k_{1}$ and $r,$ in order to compute
$C_{26}$ using $C_{12}$ and $C_{14}$, as well as any other choices of
combinations, gives us exactly the same constants from \cite{computer}.

\bigskip

\noindent\textbf{Acknowledgments}. This note was written during D.
Pellegrino's visit to Spain to participate in the $2^{\circ}$ Congreso de
J\'ovenes Investigadores - RSME 2013, held in Sevilla. He thanks CAPES for the
financial support.


\begin{bibdiv}
\begin{biblist}
\bib{ba}{article}{
author={Albuquerque, N.},
author={Bayart, F.},
author={Pellegrino, D.},
author={Seoane-Sep{\'u}lveda, J. B.},
title={Sharp generalizations of the multilinear Bohnenblust--Hille inequality},
journal={J. Funct. Anal.},
status={DOI: 10.1016/j.jfa.2013.08.013},
}
\bib{bh}{article}{
author={Bohnenblust, H. F.},
author={Hille, Einar},
title={On the absolute convergence of Dirichlet series},
journal={Ann. of Math. (2)},
volume={32},
date={1931},
number={3},
pages={600--622},
}
\bib{computer}{article}{
author={Campos, J.R.},author={Nu\~{n}ez-Alarc\'{o}n, D.},
author={Nu\~{n}ez-Alarc\'{o}n, D.},
author={Pellegrino, D.},
author={Seoane-Sep\'{u}lveda, J. B.},
author={Serrano-Rodriguez, D.M.}
title={On the multilinear Bohnenblust--Hille constants: complex versus real case},
journal={preprint},
}
\bib{def}{article}{
author={Defant, A.},
author={Popa, D.},
author={Schwarting, U.},
title={Coordinatewise multiple summing operators in Banach spaces},
journal={J. Funct. Anal.},
volume={259},
date={2010},
number={1},
pages={220--242},
}
\bib{surv}{article}{
author={Defant, A.},
author={Sevilla-Peris, P.},
title={The Bohnenblust--Hille cycle of ideas: from yesterday to today},
journal={preprint},
}
\bib{Ka}{article}{
author={Kaijser, Sten},
title={Some results in the metric theory of tensor products},
journal={Studia Math.},
volume={63},
date={1978},
number={2},
pages={157--170},
}
\bib{jfa2}{article}{
author={Nu\~{n}ez-Alarc\'{o}n, D.},
author={Pellegrino, D.},
author={Seoane-Sep\'{u}lveda, J. B.},
title={On the Bohnenblust-Hille inequality and a variant of Littlewood's 4/3 inequality},
journal={J. Funct. Anal.},
volume={264},
date={2013},
pages={326--336},
}
\bib{jfalimite}{article}{
author={Nu{\~n}ez-Alarc{\'o}n, D.},
author={Pellegrino, D.},
author={Seoane-Sep{\'u}lveda, J. B.},
author={Serrano-Rodr{\'{\i}}guez, D. M.},
title={There exist multilinear Bohnenblust-Hille constants $(C_n)_{n=1}^\infty$ with $\lim_{n\rightarrow\infty}(C_{n+1}-C_n)=0$},
journal={J. Funct. Anal.},
volume={264},
date={2013},
number={2},
pages={429--463},
}
\bib{jmaa}{article}{
author={Pellegrino, D.},
author={Seoane-Sep{\'u}lveda, J.B.},
title={New upper bounds for the constants in the Bohnenblust-Hille inequality},
journal={J. Math. Anal. Appl.},
volume={386},
date={2012},
number={1},
pages={300--307},
}
\end{biblist}
\end{bibdiv}

\end{document}